\documentclass[12pt]{amsart}

\usepackage[all]{xy}
\usepackage{fullpage}
\usepackage{latexsym}
\usepackage{amsmath}
\usepackage{amsfonts}
\usepackage{amssymb}
\usepackage{amsthm}
\usepackage{eucal}
\usepackage{enumerate,yfonts}
\usepackage{mathrsfs}
\usepackage{graphicx}
\usepackage{graphics}
\usepackage{epstopdf}
\usepackage{amscd}
\usepackage{bbm}
\usepackage{hyperref}
\usepackage{url}
\usepackage{color}
\usepackage{bbm}
\usepackage{cancel}
\usepackage{enumerate}
\usepackage{amsmath,amsthm}
\usepackage{amssymb}
\usepackage{epsfig}
\usepackage{pstricks}
\usepackage{xy}
\usepackage{xypic}
\usepackage{epigraph}

\newtheorem{thm}{Theorem}[section]
\newtheorem{corollary}[thm]{Corollary}
\newtheorem{lemma}[thm]{Lemma}

\newtheorem{thm-dfn}[thm]{Theorem-Definition}

\theoremstyle{definition}
\newtheorem{definition}[thm]{Definition}

\numberwithin{equation}{section}

\theoremstyle{remark}
\newtheorem{remark}{Remark}[section]

\newcommand{\on}{\operatorname}

\newcommand{\Loc}{\on{Loc}}

\newcommand{\quash}[1]{}  %%Anything in \quash is ignored
\newcommand{\nc}{\newcommand}

\newcommand{\calF}{{\mathcal F}}

\nc{\al}{{\alpha}} \nc{\be}{{\beta}} \nc{\ga}{{\gamma}}
\nc{\ve}{{\varepsilon}} \nc{\Ga}{{\Gamma}} %\nc{\la}{{\lambda}}
\nc{\La}{{\Lambda}}

\nc{\ad }{{\on{ad }}}

\nc{\aff}{{\on{aff}}} \nc{\Aff}{{\mathbf{Aff}}}

\nc{\der}{{\on{der}}}

\nc{\diag}{{\on{diag}}}

\nc{\Fl}{{\calF\ell}}

\nc{\Hg}{{\on{Higgs}}}
\newcommand{\Hom}{{\on{Hom}}}

\nc{\Id}{{\on{Id}}}

\nc{\Ind}{{\on{Ind}}}

\nc{\Op}{{\on{Op}}}

\nc{\res}{{\on{res}}}

\nc{\tr}{{\on{tr}}}

\nc{\GSp}{{\on{GSp}}} \nc{\GU}{{\on{GU}}} \nc{\SL}{{\on{SL}}}
\nc{\SU}{{\on{SU}}} \nc{\SO}{{\on{SO}}}

\nc{\nh}{{\Loc_{J^p}(\tau')}}
\nc{\bnh}{{\Loc_{\breve J^p}(\tau')}}

\nc{\bU}{{\overline{U}}} \nc{\IC}{{\on{IC}}}

\newcommand{\beqn}{\begin{equation*}}
\newcommand{\eeqn}{\end{equation*}}

\newcommand{\beq}{\begin{equation}}
\newcommand{\eeq}{\end{equation}}

\newcommand{\Irr}{\operatorname{Irr}}

\newcommand{\Grs}{G^{\mathrm{rs}}}

\quash{

\setlength{\parskip}{2ex}
\setlength{\oddsidemargin}{0in}
\setlength{\evensidemargin}{0in}
\setlength{\textwidth}{6.5in}
\setlength{\topmargin}{-0.15in}
\setlength{\textheight}{8.6in}

\topmargin-0.5cm \textheight22cm \oddsidemargin1.2cm \textwidth14cm}
\begin{document}
\title{Admissibility of Bernstein centers}

\author{Tsao-Hsien Chen}
        \address{School of Mathematics, University of Minnesota, Twin cities, Minneapolis, MN 55455 }
         \email{chenth@umn.edu}
         \author{Cheng-Chiang Tsai}
        \address{Institute of Mathematics, Academia Sinica, 6F, Astronomy-Mathematics Building,
No. 1, Sec. 4, Roosevelt Road, Taipei, Taiwan\vskip.2cm
also National Sun Yat-Sen University and National Taiwan University}
         \email{chchtsai@as.edu.tw}
\thanks{}
\thanks{}

\maketitle     
\begin{abstract}
We provide a criterion for determining when elements of the Bernstein center of a totally disconnected locally compact group are admissible invariant distributions in the sense of Harish-Chandra \cite{HC99}. As a consequence, we deduce the local integrability results for elements of bounded depth or Bernstein supports in the Bernstein centers of reductive $p$-adic groups. Our methods apply uniformly to both complex and mod-$\ell$ coefficients, generalizing results of Moy and Tadi\'{c} \cite{MT02} in the complex case.

\end{abstract}
\section{Bernstein centers}
We recall some basic facts about the Bernstein center, following \cite{BD84}. Let $G$ be a totally disconnected locally compact group. Fix a prime $p$, and assume that $G$ contains a compact open pro-$p$ subgroup. Let $k$ be an algebraically closed field of characteristic $\ell\neq p$. Denote by $(H(G),*)$ the $k$-algebra of locally constant $k$-valued measures on $G$ with compact support. Let $R(G)$ be the category of smooth $k$-representations of $G$. Let $\on{Irr}(G)$ be the set of irreducible $k$-representations of $G$.

 There is an equivalence between 
$R(G)$ and the category of non-degenerate $H(G)$-modules, that is,
 $H(G)$-modules $V$ satisfying $H(G)\cdot V=V$.
The action of $f\in H(G)$ on a representation 
$(\pi,V_\pi)\in R(G)$
is given by
\[\pi(f): V_\pi\to V_\pi,\ \pi(f)(v)=\int_G f(g)\pi(g)(v).\]
Let \[Z(G)=\on{End}_{R(G)}(\on{Id})\] denote the Bernstein center of $G$.
Let $D(G)$ be the space of $k$-valued distributions on $G$, and let $D(G)^G$ denote the subspace of invariant distributions.
There is a canonical embedding \[\iota:Z(G)\to D(G)^G\]
given by $\iota(z)(h)=z(h^*)(\operatorname{id})$ where $h^*(g)=h(g^{-1})$ and $z(h^*)$ is the action of $z$ on $H(G)$ viewed as an $H(G)$-module.
The image of $\iota$ is the space of essentially compact invariant distributions 
\[D(G)^G_{ec}=\{f\in D(G)^G\;|\;f*h\in H(G)\text{\ \ for any\ \ } h\in H(G) \}.\]
For any $z\in Z(G)$,  we write
\[f_z=\iota(z)\in D(G)^G_{ec}\] the corresponding essentially compact invariant distribution.

Given a decomposition $R(G)=R(G)^{\heartsuit}\oplus R(G)^{\spadesuit}$, we obtain a decomposition
\[Z(G)=Z(G)^{\heartsuit}\oplus Z(G)^\spadesuit\] 
where $Z(G)^{\heartsuit}=\{z\in Z(G)|z|_{R(G)^{\spadesuit}}=0 \}$
and $Z(G)^{\spadesuit}=\{z\in Z(G)|z|_{R(G)^{\heartsuit}}=0 \}$.

\section{Admissibility}
We recall the notion of an admissible invariant distribution introduced by Harish-Chandra \cite[Definition 15.1]{HC99}:

\begin{definition}

\begin{enumerate}
    \item 
     An invariant distribution $f\in D(G)^G$ 
     is called 
    \emph{admissible} if for any point $x\in G$, there exists a compact open pro-$p$ subgroup 
    $U_x$ such that for any compact open $U_x'\subset U_x$
 and $(\rho,V_\rho)\in \Irr(U_x')$, we have 
\[\left(f*\Theta_\rho\right)|_{xU_x}\neq0\Rightarrow \exists g\in G\text{ such that } (V_\rho)^{U_x'\cap gU_xg^{-1}}\neq 0.\]
    Here $\Theta_\rho\in H(G)$ is the distribution character of $\rho$, normalized as a measure so that $\pi(\Theta_{\rho})$ is the projection onto $\check\rho$-isotypic components, where $\check\rho$ is the dual representation.
\item  An invariant distribution $f\in D(G)^G$ 
     is called 
$U$-\emph{admissible} with respect to a compact open pro-$p$ subgroup $U\subset G$ if  
for any compact open $U'\subset U$
 and $(\rho,V_\rho)\in \Irr(U')$, we have 
\[f*\Theta_\rho\neq0\Rightarrow \exists g\in G\text{ such that } (V_\rho)^{U'\cap gUg^{-1}}\neq 0.\]

\end{enumerate}
\end{definition}

    \begin{lemma}\label{projector}
Let $z\in Z(G)$, and let $(\rho,V_\rho)\in\Irr(K)$ be an irreducible representation of a compact open pro-$p$ subgroup $K\subset G$. Then
\[
f_z*\Theta_\rho\neq 0
\]
implies that there exists $\pi\in R(G)$ such that
\[
z(\pi)\neq 0
\qquad\text{and}\qquad
\operatorname{Hom}_K(\check\rho,\pi)\neq 0.
\]
%Here $\check\rho$ is the dual of $\rho$.
Moreover, if
\[
R(G)=\mathcal R(G)^{\heartsuit}\oplus
 R(G)^{\spadesuit}
\]
is a decomposition such that $z\in Z(G)^{\heartsuit}$, then $\pi$ may be chosen in $ R(G)^{\heartsuit}$.
\end{lemma}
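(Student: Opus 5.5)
Take $\pi$ to be the regular representation, i.e. $H(G)$ itself viewed as a left $H(G)$-module (a smooth representation under left translation). The whole argument rests on the identity
\[
f_z * h = z(h) \qquad \text{for all } h\in H(G),
\]
where on the right $z$ acts on $H(G)$ regarded as a module. To get it, unwind the definition of $\iota$. Since $z$ commutes with the $H(G)$-module structure, $z(h)=z(h*e)=h*z(e)$ for suitable idempotents $e$ (e.g.\ $e=e_{K'}$ with $h$ bi-$K'$-invariant). So $z$ acting on $H(G)$ is convolution with a fixed invariant distribution, which is exactly $f_z$ once the normalization $h\mapsto h^*$ in $\iota$ is checked. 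This identification is the only real bookkeeping step, and it is where I expect to be careful with conventions (left versus right and $h^*$). Commutativity follows from invariance of $f_z$, so $f_z*h=h*f_z$.

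Suppose now that $f_z*\Theta_\rho\neq 0$. Then $z(\Theta_\rho)\neq 0$ in $H(G)$. Since $\Theta_\rho$ is an idempotent, $\Theta_\rho=\Theta_\rho*\Theta_\rho$, and hence
\[
z(\Theta_\rho)=\Theta_\rho*z(\Theta_\rho)=z(\Theta_\rho)*\Theta_\rho .
\]
So $z(\Theta_\rho)$ is a nonzero element of $\pi(\Theta_\rho)\pi$ for $\pi=H(G)$. By the normalization of $\Theta_\rho$, $\pi(\Theta_\rho)$ is the projector onto the $\check\rho$-isotypic part of $\pi|_K$. That isotypic part is therefore nonzero, which gives $\Hom_K(\check\rho,\pi)\ne 0$, and also $z(\pi)\ne 0$.

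A cleaner choice is to take $\pi$ to be the submodule $H(G)*\Theta_\rho$, the representation generated by $\Theta_\rho$ (compactly induced from $\check\rho$, up to conventions). It is smooth, $z$ acts on it nontrivially because $z(\Theta_\rho)\neq0$, and its $\check\rho$-isotypic part contains $\Theta_\rho$.

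For the refinement, write $\pi=\pi^\heartsuit\oplus\pi^\spadesuit$ according to the given decomposition of $R(G)$. Since $z\in Z(G)^\heartsuit$, $z$ kills $\pi^\spadesuit$. Hence $z(\Theta_\rho)=z(\Theta_\rho^\heartsuit)$, where $\Theta_\rho^\heartsuit$ is the $\heartsuit$-component of $\Theta_\rho$, so $z(\pi^\heartsuit)\neq0$. Moreover $z(\Theta_\rho)$ lies in $\pi^\heartsuit$ and is fixed by the projector $\pi(\Theta_\rho)$, because the decomposition is $H(G)$-equivariant and $z(\Theta_\rho)=\Theta_\rho*z(\Theta_\rho)$. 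So $\pi^\heartsuit$ has a nonzero $\check\rho$-isotypic vector, giving $\Hom_K(\check\rho,\pi^\heartsuit)\neq0$. Replacing $\pi$ by $\pi^\heartsuit\in R(G)^\heartsuit$ finishes the proof. This uses that $K$ is pro-$p$ and $\ell\ne p$, so restriction to $K$ is semisimple and a nonzero isotypic vector yields a nonzero $K$-homomorphism.
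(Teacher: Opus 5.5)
Your proof is correct and follows essentially the paper's argument. The paper factors $\pi(f_z*\Theta_\rho)=z(\pi)\circ\pi(\Theta_\rho)$ for some $\pi$ on which $f_z*\Theta_\rho$ acts nontrivially, then passes to the summand $\pi^\heartsuit$ because $z$ kills $\pi^\spadesuit$; you do the same with the explicit choice $\pi=H(G)$, tracking the nonzero $\check\rho$-isotypic vector $z(\Theta_\rho)$ in place of the nonvanishing of the operator.
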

\begin{proof}
Note that  $f_z*\Theta_\rho\neq0$
if and only if 
$\pi(f_z*\Theta_\rho)\neq0
\text{\ for some\ } \pi\in R(G)$.
Since \[\pi(f_z*\Theta_\rho)=z(\pi)\circ\pi(\Theta_\rho),\] we see that 
$f_z*\Theta_\rho\neq0$
implies  
 $z(\pi)\neq 0\text{ and } \pi(\Theta_\rho)\neq 0
\text{ for some } \pi\in R(G)$,
%On the other hand, \[\frac{\dim(\rho)}{\on{vol}(K)}\pi(\Theta_\rho):V_\pi\to V_\pi\] is the projection onto the $\check\rho$-isotypic component of $V_\pi$. Thus we conclude that $f_z*\Theta_\rho\neq0$ implies 
 i.e. $z(\pi)\neq 0\text{ and } \operatorname{Hom}_K(\check\rho,\pi)\neq 0 \text{\ for some\ } \pi\in R(G)$, which proves the first assertion.

Now assume $z\in Z(G)^\heartsuit$.
Then 
 we have 
\[\pi(f_z*\Theta_\rho)=\pi^\heartsuit(f_z*\Theta_\rho)\oplus\pi^\spadesuit(f_z*\Theta_\rho)\] 
where $\pi=\pi^\heartsuit\oplus\pi^\spadesuit\in R(G)=R(G)^\heartsuit\oplus R(G)^\spadesuit$.
Since  $z(\pi^\spadesuit)=0$, we have  $\pi^\spadesuit(f_z*\Theta_\rho)=z(\pi^\spadesuit)\circ\pi^\spadesuit(\Theta_\rho)=0$ and it follows that 
\[\pi^{\heartsuit}(f_z*\Theta_\rho)=\pi(f_z*\Theta_\rho)\]
and we can replace $\pi$ by the summand $\pi^\heartsuit\in R(G)^\heartsuit$
in the argument above. 
\end{proof}

Here is our key observation:

\begin{thm}\label{main}
Suppose that $U\subset G$ is a compact open pro-$p$ subgroup and $R(G)=R(G)^{\heartsuit}\oplus R(G)^{\spadesuit}$ is a
 decomposition
%with the property that there exists  a compact open pro-$p$ subgroup $U\subset G$ such that 
such that any $(\pi,V_\pi)\in R(G)^\heartsuit$ is generated by its $U$-fixed vectors $(V_\pi)^U$. Then for any $z\in Z(G)^\heartsuit$,  the corresponding 
invariant distribution $f_z$ is $U$-admissible.
 
\end{thm}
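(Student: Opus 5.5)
Let $z\in Z(G)^\heartsuit$, let $U'\subset U$ be compact open, and let $(\rho,V_\rho)\in\Irr(U')$ with $f_z*\Theta_\rho\neq 0$. The strategy is to use Lemma~\ref{projector} to produce a representation in $R(G)^\heartsuit$ that contains $\check\rho$. We then use the generation hypothesis to show that $\check\rho$ meets a $gUg^{-1}$-fixed vector after some translation. Finally we pass from $\check\rho$ back to $\rho$ by duality.

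\emph{Step 1.} By Lemma~\ref{projector}, applied with $K=U'$ (which is pro-$p$, being a closed subgroup of $U$), there is $\pi\in R(G)^\heartsuit$ with $z(\pi)\neq0$ and $\operatorname{Hom}_{U'}(\check\rho,\pi)\neq0$. By hypothesis $V_\pi$ is spanned by the vectors $\pi(g)v$ with $g\in G$ and $v\in V_\pi^U$. Note that $\pi(g)v$ is fixed by $gUg^{-1}$.

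\emph{Step 2.} Since $U'$ is pro-$p$ and $\ell\neq p$, smooth representations of $U'$ are semisimple. In particular $V_\pi$ is a direct sum of its $U'$-isotypic components, and the projection $e_{\check\rho}=\pi(\Theta_\rho)$ onto the $\check\rho$-isotypic part is nonzero. Because the vectors $\pi(g)v$ span $V_\pi$, some $w=\pi(g)v$ satisfies $e_{\check\rho}w\neq0$. Let $W$ be the $U'$-span of $w$. It is finite-dimensional and is a quotient of the permutation-type module generated by a vector fixed by $U'\cap gUg^{-1}$. Concretely, Frobenius reciprocity gives a nonzero $U'$-map
\[\operatorname{Ind}_{U'\cap gUg^{-1}}^{U'}\mathbf 1\longrightarrow W,\qquad \mathbf 1\mapsto w.\]
Since $W$ has $\check\rho$ as a constituent (semisimplicity), $\check\rho$ is a constituent of $\operatorname{Ind}_{U'\cap gUg^{-1}}^{U'}\mathbf 1$. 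Frobenius reciprocity, again using semisimplicity, then gives $(\check\rho)^{U'\cap gUg^{-1}}\neq0$.

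\emph{Step 3.} For a finite-dimensional representation of a pro-$p$ group $H=U'\cap gUg^{-1}$ in characteristic $\ell\neq p$, we have $\dim(\check\rho)^H=\dim(\rho^H)^{\vee}$. This holds because the averaging idempotent splits off the invariants canonically, and $(\check\rho)^H=(\rho_H)^\vee\cong(\rho^H)^\vee$. Hence $V_\rho^{U'\cap gUg^{-1}}\neq0$, which is exactly $U$-admissibility.

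The main point needing care is Step 2. There one must pass from ``$V_\pi$ is generated by $U$-fixed vectors'' to ``$\check\rho$ has a vector fixed by a conjugate of $U$ intersected with $U'$''. The clean route is to use the nonzero projection $e_{\check\rho}$ together with semisimplicity of smooth $U'$-representations; these are available because $U'$ is pro-$p$ and $\ell\neq p$. This makes the direct-sum decompositions and the Frobenius reciprocity arguments valid for mod-$\ell$ coefficients as well.
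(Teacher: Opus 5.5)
Your proof is correct and follows essentially the paper's argument. You use Lemma~\ref{projector} to get $\pi\in R(G)^\heartsuit$ containing $\check\rho$, then use the generation hypothesis to find a translate $\pi(g)v$ with $\pi(\Theta_\rho)(\pi(g)v)\neq 0$, and finally pass from $\check\rho$ to $\rho$ using $\ell\neq p$. The only difference is cosmetic: you detour through the $U'$-span of $\pi(g)v$, $\operatorname{Ind}_{U'\cap gUg^{-1}}^{U'}\mathbf 1$ and Frobenius reciprocity, whereas the paper simply observes that $\pi(\Theta_\rho)(\pi(g)v)$ is itself a nonzero $U'\cap gUg^{-1}$-fixed vector in the $\check\rho$-isotypic part (and your explicit invariants/coinvariants justification of the last step is a welcome detail the paper leaves implicit).
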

\begin{proof}
Let $\rho\in\Irr(U')$ be an irreducible representation of a compact open subgroup $U'\subset U$
such that  $f_z*\Theta_\rho\neq0$.
By Lemma \ref{projector}, there exists 
 $(\pi,V_\pi)\in R(G)^\heartsuit$ 
 satisfying $z(\pi)\neq0$ and 
 $\Hom_{U'}(\check\rho,\pi)\neq 0$.
Choose a non-zero $U'$-equivariant embedding 
$f:V_{\check\rho}\hookrightarrow V_\pi$ and let 
$0\neq w$ lie in its image. 
Since $V_\pi$ is generated by its $U$-fixed vectors, we can 
write
$w=\sum_{j=1}^n g_jv_j$ for some
$v_j\in (V_\pi)^{U}$ and $g_j\in G$. Note that we have 
$g_jv_j\in V_{\pi}^{U'\cap g_jUg_j^{-1}}$ for 
any compact open $U'\subset U$ and 
$j=1,...,n$. On the other hand, we have
\[w=\pi(\Theta_\rho)(w)=\sum_{j=1}^n\pi(\Theta_\rho)(g_jv_j)\neq 0\]
and it follows that 
\[\pi(\Theta_\rho)(g_iv_i)\neq 0\]
for some $i\in [1,n]$.
Since  $\pi(\Theta_\rho):V_\pi\to \Hom_{U'}(\check\rho,\pi)\otimes V_{\check\rho}\subset V_\pi$
is $U'$-equivariant and 
$g_iv_i$ is fixed by $U'\cap g_iUg_i^{-1}$ we conclude that 
\[0\neq\pi(\Theta_\rho)(g_iv_i)\in\Hom_{U'}(\check\rho,\pi)\otimes (V_{\check\rho})^{U'\cap g_i Ug^{-1}}.\]
Thus $(V_{\check\rho})^{U'\cap g_i Ug^{-1}}\neq0$
and, since $U'\cap g_i Ug^{-1}$ is compact
 pro-$p$ and $\ell\neq p$, it implies that 
$(V_\rho)^{U'\cap g_i Ug^{-1}}\neq0$.
\end{proof}

\section{Reductive groups}\label{reductive}
In this section we assume $G$ is a connected reductive 
group over a $p$-adic field $F$ where $\operatorname{char}(F)=0$.
%We  assume $\ell=0$ or is sufficiently large compared to $p$.

\subsection{Local integrability }
Recall the following important results of Harish-Chandra \cite{HC99}, and their generalization to mod $\ell$ coefficients in \cite{T26}:

\begin{thm}\label{HC99}
Let $f$ be an admissible invariant distribution on $G$. Then $f$ satisfies the following
\begin{enumerate}
    \item $f$ admits local character expansions as in \cite[Theorem 16.2]{HC99}.
    \item the distribution $f|_{\Grs}$
    is represented by a locally constant function 
    $F:\Grs\to k$ on the regular semisimple locus $\Grs\subset G$.
    \item When $k=\mathbb{C}$, $F$ is locally-$L^1$ on $G$ 
    and $f$ is represented by $F$. 
    \item
    When $\ell=0$ or $\ell$ is sufficiently large in the sense of \cite[Remark 3.2]{T26}, $f$ can be represented by $F$ in the sense that it can be constructed via iterated geometric series {\it loc. cit.}.
\end{enumerate}
\end{thm}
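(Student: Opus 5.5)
The plan follows Harish-Chandra's strategy in \cite{HC99}: reduce everything to a neighbourhood of a semisimple point, transfer to the Lie algebra of the centralizer, and use Howe's finiteness theorem there. For mod-$\ell$ coefficients we follow \cite{T26}, checking that each step uses only admissibility together with the invertibility of pro-$p$ volumes in $k$, which holds since $\ell\neq p$. All four assertions are local, so we fix $x\in G$ and work in a small invariant neighbourhood of $x$.

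\textbf{Step 1 (reduction to the semisimple part).} Write $x=su$ with $s$ semisimple and $u$ unipotent. By Harish-Chandra's submersion principle for the map $G\times M\to G$, $(g,m)\mapsto gmg^{-1}$, where $M=Z_G(s)^\circ$, restricted to a small $s$-regular neighbourhood $\omega\subset M$ of $s$, an invariant distribution near $x$ descends to an $M$-invariant distribution $f_s$ on $\omega$. We then check that admissibility of $f$ at $x$, in the sense of the Definition with subgroups $U_x$, implies admissibility of $f_s$ at $s$ with respect to suitable subgroups of $M$. This is a compatibility between the descent map and the convolutions $f*\Theta_\rho$, and it can be proved with the same kind of isotypic bookkeeping as in Lemma~\ref{projector}.

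\textbf{Step 2 (transfer to the Lie algebra).} Near $s$, use the exponential map, or a Cayley-type map (available since $\operatorname{char}F=0$), to identify $s\omega$ with a neighbourhood of $0$ in $\mathfrak m=\operatorname{Lie}M$. Admissibility then becomes the statement that the transferred distribution $\theta$ is supported, on the Fourier side, on finitely many lattice types. Now apply Howe's finiteness theorem: the restriction to a lattice $L$ of invariant distributions with Fourier support in $G\cdot L^*$ lies in the span of the Fourier transforms $\widehat\mu_{\mathcal O}$ of the finitely many nilpotent orbital integrals. This span is finite-dimensional and the argument is purely combinatorial, so it works over $k$ when $\ell\neq p$. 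The resulting identity $\theta=\sum_{\mathcal O} c_{\mathcal O}\widehat\mu_{\mathcal O}$ near $0$ gives (1).

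\textbf{Step 3 (regularity).} For (2), the local character expansion reduces the problem to showing that each $\widehat\mu_{\mathcal O}$ is represented by a locally constant function on $\mathfrak m^{\mathrm{rs}}$. This follows from Harish-Chandra's analysis of Fourier transforms of orbital integrals, near regular semisimple points, via a further descent to a torus, where everything is smooth. For (3), with $k=\mathbb C$, we use the known local $L^1$-ness of $\widehat\mu_{\mathcal O}$ together with Harish-Chandra's bounds $|D|^{1/2}$ on the Weyl discriminant. These bounds show that the function $F$ is locally integrable and that $f-F$ is a distribution supported off $\Grs$ annihilated by the expansion, hence zero.

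\textbf{Main obstacle.} The hardest step will be (4). Without absolute values, $L^1$ bounds are meaningless, and one must show that $\langle f,h\rangle$ is recovered from $F$ by a convergent procedure. We would follow \cite{T26}: write $h$ near a singular point as a limit over shells $\varpi^n L\setminus\varpi^{n+1}L$. By homogeneity of $\widehat\mu_{\mathcal O}$ under scaling, the contributions of successive shells form a geometric series whose ratio is a power of $q$. The sum is well-defined in $k$ precisely when the finitely many ratios $1-q^{a}$ appearing are invertible in $k$, which is the condition that $\ell$ be large. Iterating over the stratification by orbits gives the ``iterated geometric series'' representation.
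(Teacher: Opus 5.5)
Over $\mathbb C$ your outline is a faithful sketch of the arguments the paper only cites: (1)--(3) from \cite{HC99}, and (4) from (1) together with \cite{T26}, which is what your last paragraph describes. The genuine gap is the passage to mod-$\ell$ coefficients in Step 2.

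You assert that $\theta=\sum_{\mathcal O}c_{\mathcal O}\widehat{\mu}_{\mathcal O}$ near $0$ holds over $k$ for every $\ell\neq p$, because the argument is ``purely combinatorial''. This merges two results. Howe's theorem gives finite-dimensionality of the restrictions to $C_c(\mathfrak m/L)$. Harish-Chandra's refinement says that, on a sufficiently small neighbourhood of $0$, these restrictions are spanned by the $\widehat{\mu}_{\mathcal O}$. The refinement presupposes that nilpotent orbital integrals make sense as $k$-valued distributions, and this is not automatic. A nilpotent orbit is not closed, so $\mathcal O\cap L$ is not compact. Hence $\mu_{\mathcal O}(1_L)$ is an infinite sum of shell volumes, which by homogeneity is a (generally iterated) geometric series in powers of $q$. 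Over $\mathbb C$ it converges; over $k$ it can only be summed formally. That requires the relevant $1-q^{a}$ to be invertible in $k$, which is exactly the obstruction you invoke yourself for (4). So Step 2 as written does not give (1) for all $\ell\neq p$. For general $\ell$ you need the mod-$\ell$ treatment of \cite{VW01}, which is what the paper appeals to, rather than a verbatim transcription of Harish-Chandra's argument with the $\widehat{\mu}_{\mathcal O}$.

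Your route to (2) inherits this problem, since it needs each $\widehat{\mu}_{\mathcal O}$ to be a locally constant function on $\mathfrak m^{\mathrm{rs}}$ over $k$. The paper avoids nilpotent orbital integrals altogether: it applies (1) at a point $\gamma\in\Grs$ itself. Then $Z_G(\gamma)^\circ$ is a torus whose only nilpotent orbit is $\{0\}$, and $\widehat{\mu}_{\{0\}}$ is constant. So the expansion says directly that $f$ is given by a constant near $\gamma$ after descent, for every $\ell\neq p$. Your ``further descent to a torus'' is this same argument, but it should be applied to $f$, not to the $\widehat{\mu}_{\mathcal O}$.

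A smaller point: the descent of admissibility in Step 1 is asserted rather than argued. It requires comparing $G$-conjugates of $U_x$ with $M$-conjugates near $s$, which is a substantive step in \cite{HC99} and not a formal consequence of the bookkeeping in Lemma~\ref{projector}.
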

\begin{proof}
    When $k=\mathbb C$, part (1), (2) and (3) 
    are  \cite[Theorem 16.1, 16.2, 16.3]{HC99}.
    In general, as observed in \cite[Th\'{e}ore\`{m}e 5.4.4]{VW01},  the same argument in \cite{HC99} implies part (1). Part (2) is always a special case of (1) when we consider local character expansions near regular semisimple elements. Part (4) follows from Part (1) and the main result of \cite{T26}.
\end{proof}

\begin{corollary}\label{local int for z} Let $Z(G)^{\heartsuit}$ be as in Theorem \ref{main}. For any
$z\in Z(G)^\heartsuit$, the corresponding 
invariant distribution $f_z$ satisfies 
(1)-(4) of Theorem \ref{HC99}.
\end{corollary}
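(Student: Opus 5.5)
The plan is to deduce the corollary from Theorem \ref{main} and Theorem \ref{HC99}. The only gap is that Theorem \ref{main} gives $U$-admissibility, while Theorem \ref{HC99} needs admissibility in Harish-Chandra's pointwise sense. So the key step is to show that $U$-admissibility implies admissibility. Once that is done, Theorem \ref{HC99} applied to $f=f_z$ gives properties (1)--(4) at once.

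To pass from $U$-admissibility to admissibility, I fix $x\in G$ and choose $U_x$ as follows. Since $f_z$ is invariant, I can work with conjugates of $U$. The natural choice is $U_x := U\cap x^{-1}Ux$, or simply $U_x:=U$ if the definition allows the same subgroup for every point; in either case $U_x$ is a compact open pro-$p$ subgroup contained in $U$.

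Now let $U_x'\subset U_x$ be compact open and $\rho\in\Irr(U_x')$ with $(f_z*\Theta_\rho)|_{xU_x}\neq 0$. Then in particular $f_z*\Theta_\rho\neq0$. Since $U_x'\subset U$, the $U$-admissibility from Theorem \ref{main} gives some $g\in G$ with $(V_\rho)^{U_x'\cap gUg^{-1}}\neq 0$. It remains to compare this with the group $U_x'\cap gU_xg^{-1}$ required in the definition. For this I either take $U_x=U$ directly, or I re-run the proof of Theorem \ref{main} with $U_x$ in place of $U$. The second option works because every $\pi\in R(G)^\heartsuit$ is generated by $(V_\pi)^U\subset(V_\pi)^{U_x}$, so the hypothesis of Theorem \ref{main} holds for any smaller subgroup $U_x\subset U$. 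Hence $f_z$ is $U_x$-admissible for every compact open $U_x\subset U$, and the condition restricted to $xU_x$ is weaker than the unrestricted one. So $f_z$ is admissible.

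The main point of care is exactly this matching of $U_x$ with the subgroup for which $U$-admissibility holds. The observation that the generation hypothesis is inherited by subgroups of $U$ resolves it, so I do not expect a real obstacle. After that, I verify that the hypotheses of Theorem \ref{HC99} ($G$ reductive $p$-adic, characteristic zero, and the appropriate $\ell$ for parts (3) and (4)) are those in force, and I conclude.
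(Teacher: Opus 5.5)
Your proposal is correct and matches the paper's proof, which simply observes that $U$-admissibility implies admissibility and then combines Theorem \ref{main} with Theorem \ref{HC99}. Your first option is exactly the verification behind that observation: take $U_x:=U$ for every $x$, since $(f_z*\Theta_\rho)|_{xU}\neq 0$ forces $f_z*\Theta_\rho\neq 0$. The alternative via $U\cap x^{-1}Ux$ and re-running Theorem \ref{main} is valid but unnecessary.
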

\begin{proof}
    Since $U$-admissibility implies admissibility, the claim follows from Theorem \ref{main} and Theorem \ref{HC99}.
\end{proof}

\subsection{Depths} For the rest of this section we assume $G$ splits over a tamely ramified extension. According to \cite{MP94,MP96,BD84} if $\ell=0$ and  \cite{V96} and \cite[Appendice A]{D09} in general, %if $\ell>0$,
for any non-negative rational number $r\in\mathbb Q_{\geq0}$, we have the depth decomposition $R(G)=R(G)_{\leq r}\oplus R(G)_{>r}$
where $R(G)_{\leq r}$
(resp. $R(G)_{>r}$)
consists of smooth representations with
irreducible subquotients of depths $\leq r$ (resp. $>r$). Let 
$K_r:=\cap_{x\in\overline{C}}G_{x,r+}$ where $\overline{C}$ is the closure of a fixed alcove.

\begin{lemma}\label{K_r} 
 The depth decomposition $R(G)=R(G)^{\heartsuit}\oplus R(G)^{\spadesuit}$, with 
   $R(G)^{\heartsuit}=R(G)_{\leq r}$
   and $R(G)^{\spadesuit}= R(G)_{>r}$, %is nice in the sense of Definition \ref{nice}, that is, there exists 
   is such that any $\pi\in R(G)^{\heartsuit}$ is generated by $K_r$-fixed vectors.
   %a compact open pro-$p$ subgroup $U\subset G$ such that    any $(\pi,V_\pi)\in R(G)_{\leq r}$ is generated by $U$-fixed vectors.
\end{lemma}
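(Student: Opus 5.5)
The plan is to reduce generation by $K_r$-fixed vectors to the Moy--Prasad theory of unrefined minimal K-types, combined with the fact that the depth-$\le r$ block is characterized by $G_{x,r+}$-invariants. The key input is the following characterization, due to Moy--Prasad and Bernstein--Deligne for $\ell=0$, and to Vignéras and Dat (\cite{V96}, \cite[Appendice A]{D09}) for general $\ell$. If $\pi\in R(G)_{\leq r}$, then $V_\pi$ is generated by $\sum_{x}(V_\pi)^{G_{x,r+}}$, where $x$ ranges over the building. Dat's construction of the depth decomposition is exactly of this form: the category $R(G)_{\le r}$ is the subcategory generated by the compactly induced projectives $\operatorname{ind}_{G_{x,r+}}^G 1$, for $x$ ranging over the (finitely many $G$-orbits of) facets. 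I would take this as the main invocation, and check that it holds uniformly in $\ell$. This should be fine because each $G_{x,r+}$ is pro-$p$ and $\ell\ne p$, so its invariants form an exact functor.

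Next I reduce to points of $\overline C$. Every point of the building is $G$-conjugate to a point of the closed alcove $\overline C$. Indeed, every facet is conjugate to a facet of $\overline C$; this uses the action of $G$ (not just $G^{\mathrm{der}}$), and the orbits under the full group are at least as large. If $y=gx$ with $x\in\overline C$, then $G_{y,r+}=gG_{x,r+}g^{-1}$ and $(V_\pi)^{G_{y,r+}}=\pi(g)(V_\pi)^{G_{x,r+}}$. Hence the $G$-span of $\sum_{x\in\overline C}(V_\pi)^{G_{x,r+}}$ is all of $V_\pi$.

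Finally, since $K_r=\bigcap_{x\in\overline C}G_{x,r+}\subset G_{x,r+}$ for each $x\in\overline C$, we get $(V_\pi)^{G_{x,r+}}\subset (V_\pi)^{K_r}$. Therefore $V_\pi$ is generated by $(V_\pi)^{K_r}$. I should also note that $K_r$ is a compact open pro-$p$ subgroup, as Theorem \ref{main} requires. It is an intersection of pro-$p$ groups, hence pro-$p$. It is open because, by the filtration properties, $G_{x,r+}$ is locally constant in $x$ along the facets of $\overline C$, and there are finitely many facets, so the intersection is effectively finite. Alternatively, $K_r$ contains $G_{x_0,s}$ for a suitable $s$, with $x_0$ the barycenter.

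The main obstacle is the first step: pinning down the precise form of the generation statement for mod-$\ell$ coefficients. The issue is that the depth decomposition in the literature may be stated via irreducible subquotients rather than via generation of the whole object. If only the subquotient form is available, I would use the following argument. The subrepresentation $W\subset V_\pi$ generated by $\sum_x V_\pi^{G_{x,r+}}$ has quotient $V_\pi/W$ with no $G_{x,r+}$-invariants for any $x$, by exactness of invariants under pro-$p$ groups. Hence every irreducible subquotient of $V_\pi/W$ has depth $>r$. This quotient therefore lies in $R(G)_{>r}$, while also being a quotient of an object of $R(G)_{\le r}$, so it vanishes. That forces $W=V_\pi$.
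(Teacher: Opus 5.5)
Your proposal is correct and is essentially the paper's argument. The paper simply cites Moy--Prasad and Dat's Lemme A.3, noting that Dat's progenerators $P(r)$ are generated by $K_r$-fixed vectors by construction; that observation is exactly your combination of conjugating $x$ into $\overline C$ and using $K_r\subset G_{x,r+}$.

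Your fallback argument via exactness of pro-$p$ invariants is in fact a complete proof on its own, given the paper's definition of $R(G)_{\le r}$ through irreducible subquotients. One small slip in a side remark: $G_{x,r+}$ need not be constant along facets of $\overline C$ when $r>0$. Only finitely many distinct groups occur as $x$ ranges over $\overline C$, however, and your alternative $K_r\supseteq G_{x_0,s}$ suffices.
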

\begin{proof}
It follows from 
\cite{MP94} if $\ell=0$ and 
\cite[Lemme A.3]{D09} if $\ell>0$ that  
any $(\pi,V_\pi)\in R(G)_{\leq r}$ is generated 
by $K_r$-fixed vectors; indeed, the progenerators $P(r)$ in \cite[p. 330]{D09} are generated by $K_r$-fixed vectors by their very construction. 
%Since the family of compact open pro-$p$ subgroups of $G$ is cofinal in the set of open compact subgroups we can take $U\subset K_r$ to be any such pro-$p$ subgroup.  
\end{proof}

Consider the depth decomposition $Z(G)=Z(G)_{\leq r}\oplus Z(G)_{>r}$ of the Bernstein center
where 
$Z(G)_{\leq r}=\{z\in Z(G)|z|_{R(G)_{>r}}=0 \}$
and $Z(G)_{>r}=\{z\in Z(G)|z|_{R(G)_{\leq r}}=0 \}$.

\begin{corollary}\label{bounded depth}
      For any $z\in Z(G)_{\leq r}$
the corresponding invariant distribution 
$f_z$ satisfies 
(1)-(4) of Theorem \ref{HC99}.
\end{corollary}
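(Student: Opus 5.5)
The plan is to apply Corollary \ref{local int for z} to the depth decomposition. Take $R(G)^{\heartsuit}=R(G)_{\leq r}$ and $R(G)^{\spadesuit}=R(G)_{>r}$. With this choice, $Z(G)^{\heartsuit}$ is exactly $Z(G)_{\leq r}$ by the definitions recalled above. So it suffices to check that the hypotheses of Theorem \ref{main} hold for this decomposition with the compact open subgroup $U=K_r$.

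First, I would check that $K_r$ is a compact open pro-$p$ subgroup. The group $K_r=\bigcap_{x\in\overline{C}}G_{x,r+}$ is an intersection of Moy--Prasad subgroups. Each $G_{x,r+}$ with $r\geq 0$ is a pro-$p$ open compact subgroup, since it is contained in the pro-unipotent radical $G_{x,0+}$. Compactness and the pro-$p$ property pass to closed subgroups, so $K_r$ is compact and pro-$p$.

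Openness is the only point needing a short argument, because the intersection is over infinitely many points of $\overline{C}$. The filtration $G_{x,r+}$ depends only on the facet containing $x$ together with finitely many affine root conditions. More precisely, $G_{x,r+}$ is generated by $T_{r+}$ and the root subgroups $U_{\psi}$ for affine roots $\psi$ with $\psi(x)>r$. Since $\overline{C}$ is compact, there is an $s$ with $G_{x,s}\subset G_{y,r+}$ for all $x,y\in\overline{C}$, and hence $G_{x_0,s}\subset K_r$ for any fixed $x_0$. This is standard, and I would treat it as known.

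Second, Lemma \ref{K_r} states that every $\pi\in R(G)_{\leq r}$ is generated by its $K_r$-fixed vectors. So the hypothesis of Theorem \ref{main} holds with $U=K_r$. Theorem \ref{main} then shows that $f_z$ is $K_r$-admissible for each $z\in Z(G)_{\leq r}$.

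Third, Corollary \ref{local int for z}, which passes from $U$-admissibility to admissibility by taking $U_x=U$ for every $x$, shows that $f_z$ satisfies (1)--(4) of Theorem \ref{HC99}.

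The only possible obstacle is the openness of $K_r$, and it is routine. Everything else is a direct citation of earlier results.
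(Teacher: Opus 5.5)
Your proposal is correct and takes the same route as the paper: apply Corollary \ref{local int for z} to the depth decomposition $R(G)^{\heartsuit}=R(G)_{\leq r}$, $R(G)^{\spadesuit}=R(G)_{>r}$, with Lemma \ref{K_r} supplying the hypothesis of Theorem \ref{main} for $U=K_r$. Your extra check that $K_r$ is compact, open and pro-$p$ is a detail the paper leaves implicit; your argument for it, that $G_{x_0,s}\subset K_r$ for $s\gg 0$ because $\overline{C}$ is bounded, is fine.
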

\begin{proof}
It follows from Corollary \ref{local int for z} and Lemma \ref{K_r}.
\end{proof}

\subsection{Bernstein supports}
According to \cite{BD84} if $\ell=0$ and \cite[Theorem 4.22]{DHKM24} if $\ell$ is banal, there is a Bernstein decomposition $R(G)=\prod_{[M,\sigma]} R(G)_{[M,\sigma]}$ into the so-called Bernstein blocks $R(G)_{[M,\sigma]}$, where the index set $[M,\sigma]$ is the inertial equivalence classes of supercuspidal supports. We say that $z\in Z(G)$ has bounded Bernstein support if its restriction to all but finitely many Bernstein blocks is zero.

\begin{corollary}\label{supp}\label{bounded support} Suppose $\ell=0$ or $\ell$ is banal. For any $z\in Z(G)$ with bounded Bernstein support, 
the corresponding invariant distribution 
$f_z$ satisfies 
(1)-(4) of Theorem \ref{HC99}.
\end{corollary}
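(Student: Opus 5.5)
The plan is to reduce Corollary \ref{bounded support} to Corollary \ref{local int for z}. For that I will exhibit a decomposition $R(G)=R(G)^{\heartsuit}\oplus R(G)^{\spadesuit}$ with $z\in Z(G)^{\heartsuit}$, together with a compact open pro-$p$ subgroup $U$, such that every object of $R(G)^\heartsuit$ is generated by its $U$-fixed vectors.

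Let $z\in Z(G)$ have bounded Bernstein support. Then there is a finite set $S$ of inertial classes $[M,\sigma]$ such that $z$ vanishes on $R(G)_{[M,\sigma]}$ for every $[M,\sigma]\notin S$. Define
\[R(G)^\heartsuit=\prod_{[M,\sigma]\in S}R(G)_{[M,\sigma]},\qquad R(G)^\spadesuit=\prod_{[M,\sigma]\notin S}R(G)_{[M,\sigma]}.\]
This is a direct sum decomposition of $R(G)$ by the Bernstein decomposition (\cite{BD84} for $\ell=0$, \cite[Theorem 4.22]{DHKM24} for $\ell$ banal). By construction $z|_{R(G)^\spadesuit}=0$, so $z\in Z(G)^\heartsuit$.

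It remains to find $U$, and I will do this through depth. Each block $R(G)_{[M,\sigma]}$ has a well-defined depth $r_{[M,\sigma]}$, namely the depth of the supercuspidal $\sigma$ of $M$. All irreducible subquotients of objects in the block have this depth, because depth is preserved under parabolic induction and depth is constant on Bernstein components (Moy--Prasad for $\ell=0$, \cite{V96} and \cite{D09} in general). Set $r=\max_{[M,\sigma]\in S}r_{[M,\sigma]}$, which is finite since $S$ is finite. Then $R(G)^\heartsuit\subset R(G)_{\le r}$. By Lemma \ref{K_r}, every $\pi\in R(G)^\heartsuit$ is generated by its $K_r$-fixed vectors. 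So take $U=K_r$, which is a compact open pro-$p$ subgroup. Theorem \ref{main} then shows $f_z$ is $K_r$-admissible, hence admissible, and Theorem \ref{HC99} (equivalently Corollary \ref{local int for z}) gives properties (1)--(4).

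The main obstacle is the compatibility between the Bernstein decomposition and the depth decomposition: each block must lie in a single $R(G)_{\le r}$. In characteristic $0$ this is classical. For banal $\ell$, I would either cite that the depth decomposition of \cite{D09} is coarser than the block decomposition, or argue directly. The direct argument runs as follows. The decomposition $R(G)=R(G)_{\le r}\oplus R(G)_{>r}$ corresponds to a central idempotent of $Z(G)$. Each Bernstein block is indecomposable, since its center is the ring of functions on a connected variety, so the block cannot split under this idempotent. Hence each block lies entirely in $R(G)_{\le r}$ or entirely in $R(G)_{>r}$ for every $r$, which gives a well-defined depth $r_{[M,\sigma]}$. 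Alternatively, one can avoid the depth argument entirely. Each block $R(G)_{[M,\sigma]}$ has a finitely generated progenerator, so it is generated by vectors fixed under some compact open pro-$p$ subgroup $U_{[M,\sigma]}$, and then $U=\bigcap_{[M,\sigma]\in S}U_{[M,\sigma]}$ works.
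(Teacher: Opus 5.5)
Your proposal is correct and follows essentially the paper's route. The paper likewise uses that unramified twists and parabolic induction preserve depth to put the finitely many blocks in the support of $z$ inside some $R(G)_{\le r}$, and concludes via Corollary~\ref{bounded depth} (i.e.\ $z\in Z(G)_{\le r}$); you instead feed the block decomposition directly into Theorem~\ref{main} with $U=K_r$ from Lemma~\ref{K_r}, which is only a cosmetic difference, and your depth-free alternative via finitely generated progenerators of the finitely many blocks is also valid.
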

\begin{proof}
Since unramified twists  and parabolic induction preserve depth,
we have 
$R(G)_{[M,\sigma]}\subset R(G)_{\leq r}$ for some $r$.
Thus any $z\in Z(G)$ with bounded Bernstein support lies in $Z(G)_{\leq r}$ for some sufficiently large $r$ and the claim follows from Corollary \ref{bounded depth}.
\end{proof}
\begin{remark}
    In the case $\ell=0$, Corollary \ref{bounded depth} or \ref{supp} above provides an alternative argument for the main results in \cite{MT02}. 
    The case $\ell>0$ appears to be new and does not follow directly from the methods of \emph{loc. cit.}
\end{remark}

\begin{remark}
Consider the case where $G$ is a reductive group over a local function field $F$ of characteristic $p>0$. We expect Harish-Chandra's Theorem (Theorem \ref{HC99}(1)) to remain valid when $p$ is sufficiently large. When this is the case, all results in Section \ref{reductive} follow. 
Moreover, with some modifications, we expect that the results of the note remain valid when $k$ is replaced by any $\mathbb Z[1/p]$-algebra.

\end{remark}

\subsection*{Acknowledgments} We thank the NCTS-National Center for Theoretical Sciences at Taipei where parts of this work were done. The research of T.-H. Chen is supported by NSF grant DMS-214372
and Simons Fellowships.
The research of C.-C. Tsai is supported by NSTC grant 115-2628-M-001-002.

\end{document}